\newtheorem{lemma}{Lemma}[section]
\newtheorem{thm}[lemma]{Theorem}
\newtheorem{prop}[lemma]{Proposition}
\newtheorem{cor}[lemma]{Corollary}
\theoremstyle{definition}
\newtheorem{rmk}[lemma]{Remark}
\newtheorem{quest}[lemma]{Question}
\newtheorem{example}[lemma]{Example}
\title[Rational homology balls and Casson--Gordon invariants]{Handle decompositions of rational homology balls and Casson--Gordon invariants}
\author{Paolo Aceto}
\address{R\'enyi Institute of Mathematics, Budapest, Hungary}
\email{aceto.paolo@renyi.mta.hu}
\author{Marco Golla}
\address{Department of Mathematics, Uppsala University, Sweden}
\email{marco.golla@math.uu.se}
\author{Ana G. Lecuona}
\address{Aix Marseille Univ., CNRS, Centrale Marseille, I2M, Marseille, France}
\email{ana.lecuona@univ-amu.fr}
\date{}
\newcommand{\Z}{\mathbb{Z}}
\newcommand{\Zm}{C_{m}}
\newcommand{\Zk}{C_{k}}
\newcommand{\Q}{\mathbb{Q}}
\newcommand{\C}{\mathbb{C}}
\newcommand{\de}{\partial}
\renewcommand{\tilde}{\widetilde}
\renewcommand{\phi}{\varphi}
\DeclareMathOperator{\rk}{rk}
\DeclareMathOperator{\coker}{coker}
\begin{document}

\maketitle
\begin{abstract}
Given a rational homology sphere which bounds rational homology balls, we investigate the complexity of these balls as measured by the number of 1-handles in a handle decomposition.
We use Casson--Gordon invariants to obtain lower bounds which also lead to lower bounds on the fusion number of ribbon knots.
We use Levine--Tristram signatures to compute these bounds and produce explicit examples.
\end{abstract}

\section{Introduction}
Given two concordant knots it is natural to ask how complicated a concordance between them must be.
A similar question can be asked about (rational) homology cobordant 3-manifolds and cobordisms between them.
Very little is known about these two simple and natural questions.

In the context of knot concordance a natural notion of complexity already considered by several authors \cite{fusion} is that of the \emph{fusion number} of a ribbon knot, i.e.\ the minimal number of 1-handles needed to construct a ribbon disc. The analogous notion for homology spheres bounding a homology ball, in the integral and rational case, is the minimum number of 1-handles needed to construct such a ball.
These numerical invariants encode deep 4-dimensional information on knots and 3-manifolds and are extremely hard to compute.
Motivation in this direction comes also from analogous questions which are purely 4-dimensional.
One of the oldest open problems in smooth 4-manifold topology asks if it is true that every smooth simply-connected 4-manifold admits a handle decomposition with no 1-handles.

In this paper we investigate the complexity of rational homology balls (as measured by the number of handles in their handle decompositions) bounded by a given rational homology sphere. More precisely we consider the following question.

\begin{quest}
Let $Y$ be a rational homology sphere which bounds a rational homology ball.
What is the minimal number of 1-handles needed to realise a rational homology ball bounded by $Y$?
What if we restrict to those rational homology balls constructed only with handles of index at most 2?
\end{quest}

We provide lower bounds on these numbers using Casson--Gordon signature invariants, which associate to a rational homology sphere $Y$ and a character $\varphi:H_1(Y;\Z)\rightarrow\mathbb{C}^*$ the rational number $\sigma(Y,\varphi)$.
One of the key features of our approach is the use of non prime order characters.
We relate this number to handle decompositions of rational homology balls bounding $Y$ via the following statement.

\begin{thm}\label{t:mainintro}
Let $Y$ be a $3$-manifold that bounds a rational homology ball $W$, and let $\varphi: H_1(Y)\to\mathbb{C}^*$ be a nontrivial character that factors through $H_1(W)$.
Every handle decomposition of $W$ contains at least $|\sigma(Y,\phi)|-1$ odd-index handles.
\end{thm}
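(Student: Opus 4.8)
The plan is to use $W$ itself as the auxiliary $4$-manifold in the definition of the Casson--Gordon invariant. The hypothesis that $\varphi$ factors through $H_1(W)$ means precisely that $\varphi$ extends to a character $\tilde\varphi\colon H_1(W)\to\C^*$, so $(W,\tilde\varphi)$ is an admissible bounding pair for $(Y,\varphi)$ using a single copy of $Y$. I would then invoke the defining property of the signature invariant, namely that for any such pair one has $\sigma(Y,\varphi)=\operatorname{sign}_{\tilde\varphi}(W)-\operatorname{sign}(W)$, where $\operatorname{sign}_{\tilde\varphi}(W)$ is the signature of the intersection form on the twisted homology $H_2(W;\C_{\tilde\varphi})$ and $\operatorname{sign}(W)$ is the ordinary signature. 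Since $W$ is a rational homology ball its ordinary intersection form is trivial, so $\operatorname{sign}(W)=0$ and therefore $\sigma(Y,\varphi)=\operatorname{sign}_{\tilde\varphi}(W)$.

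The next step is a standard estimate: the signature of a Hermitian form is bounded in absolute value by the dimension of the space carrying it, so $|\sigma(Y,\varphi)|\le b_2^\varphi(W)$, where $b_k^\varphi(W):=\dim_\C H_k(W;\C_{\tilde\varphi})$. The crux is then to express $b_2^\varphi$ in terms of the other twisted Betti numbers. Because $Y$ bounds a rational homology ball it is a rational homology sphere, so $H_1(Y)$ is finite and $\varphi$ has finite order; thus $\C_{\tilde\varphi}$ is a genuine one-dimensional local system. As $\varphi$ is nontrivial and $W$ is connected we get $b_0^\varphi=0$, and as $\partial W\ne\emptyset$ we get $b_4^\varphi=0$. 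A one-dimensional local system does not change Euler characteristics, so $\sum_k(-1)^k b_k^\varphi=\chi(W)=1$, which gives the identity $b_2^\varphi=1+b_1^\varphi+b_3^\varphi$.

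Finally I would bound the odd twisted Betti numbers by handle counts. For any handle decomposition of $W$, the associated twisted cellular chain complex has a single $\C$ summand for each handle, so $\dim_\C C_k(W;\C_{\tilde\varphi})$ equals the number $h_k$ of $k$-handles, whence $b_k^\varphi\le h_k$. Combining the three steps yields $|\sigma(Y,\varphi)|\le b_2^\varphi=1+b_1^\varphi+b_3^\varphi\le 1+h_1+h_3$, so the number of odd-index handles $h_1+h_3$ is at least $|\sigma(Y,\varphi)|-1$, as claimed.

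I expect the main obstacle to be the first step: pinning down the precise definition and normalisation of $\sigma(Y,\varphi)$ and verifying that $W$ is a legitimate choice of bounding manifold with exactly one copy of $Y$, so that the intrinsic invariant really is computed as the signature defect over $W$. Once that identification is in place, together with the vanishing of $\operatorname{sign}(W)$, the remaining homological bookkeeping---the signature bound, the twisted Euler characteristic computation, and the handle estimate---should be routine.
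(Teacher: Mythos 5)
Your proposal is correct and follows essentially the same route as the paper's proof: take $W$ itself as the bounding $4$-manifold (so $r=1$ and $\sigma(W)=0$), bound $|\sigma^{\psi}(W)|$ by $b_2^{\psi}(W)$, use the twisted Euler characteristic $\chi^{\psi}(W)=\chi(W)=1$ together with the vanishing of $b_0^{\psi}$ to get $b_2^{\psi}=1+b_1^{\psi}+b_3^{\psi}$, and bound the odd twisted Betti numbers by the handle counts. The only cosmetic difference is that the paper works with $\Q(\zeta_m)$-coefficients (where $m$ is the order of the image of the extended character, which is finite because $H_1(W)$ is torsion) rather than a generic one-dimensional local system, but the homological bookkeeping is identical.
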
 

A particularly simple case in which we can use the above theorem is when any given character on the 3-manifold factors through  \emph{any} rational homology ball bounded by it. One example of this situation is described in the following corollary, where $C_{k}$ denotes the cyclic group with $k$ elements, seen as the subgroup of $\C^{*}$ generated by
a root of unity of order $k$. Notice that in the next corollary the only restriction we are imposing on $H_{1}(Y)$ is that this group is cyclic. If $Y$ bounds a rational homology ball, it follows from the long exact sequence of the pair that the order of $H_{1}(Y)$ is a square.  

\begin{cor}
Let $Y$ be a $3$-manifold with $H_1(Y)$ cyclic of order $m^2$ 
and $\phi:H_1(Y) \to C_{k}$ be a nontrivial character with $k | m$.
If $W$ is a rational homology ball with $\partial W = Y$, then every handle decomposition of $W$ contains at least $|\sigma(Y,\phi)|-1$ odd-index handles.
\end{cor}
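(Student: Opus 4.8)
The plan is to verify that the hypotheses of Theorem~\ref{t:mainintro} are automatically satisfied in this situation, so that the claimed bound follows at once. The only thing requiring an argument is that the given character $\varphi\colon H_1(Y)\to C_k$ factors through $H_1(W)$ for \emph{every} rational homology ball $W$ bounded by $Y$; once this is in place, Theorem~\ref{t:mainintro} applies verbatim and produces the bound $|\sigma(Y,\varphi)|-1$ on the number of odd-index handles.

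First I would recall the standard algebraic-topology input. If $W$ is a rational homology ball with $\partial W=Y$, then the inclusion-induced map $i_*\colon H_1(Y)\to H_1(W)$ is surjective, and its kernel $L=\ker i_*$ is a metabolizer for the linking form of $Y$; in particular $|L|^2=|H_1(Y)|=m^2$, so $|L|=m$. This is the usual consequence of Poincar\'e--Lefschetz duality applied to the long exact sequence of the pair $(W,Y)$, and is exactly the input behind the remark that $|H_1(Y)|$ is a square. Since a character into $\mathbb{C}^*$ factors through $H_1(W)$ precisely when it is trivial on $\ker i_*=L$ (using surjectivity of $i_*$, or equivalently the divisibility of $\mathbb{C}^*$), the problem reduces to showing that $\varphi|_L$ is trivial.

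Here the cyclicity hypothesis does all the work. Because $H_1(Y)\cong\mathbb{Z}/m^2$ is cyclic, it has a \emph{unique} subgroup of order $m$, namely $L=m\cdot H_1(Y)$, generated by $mg$ for a generator $g$ of $H_1(Y)$; crucially, this forces $L$ to be the same subgroup no matter which $W$ we choose. It then only remains to see that $\varphi$ annihilates $mg$: writing $\varphi(g)\in C_k$, an element of order dividing $k$, and using $k\mid m$, we get $\varphi(mg)=\varphi(g)^m=\bigl(\varphi(g)^k\bigr)^{m/k}=1$. Hence $\varphi|_L$ is trivial, $\varphi$ factors through $H_1(W)$, and the induced character is still nontrivial since $\varphi$ is, so Theorem~\ref{t:mainintro} gives the conclusion.

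I expect the only genuinely delicate point to be the identification $|L|=m$ together with the observation that cyclicity is exactly what guarantees $L$ is independent of $W$. For a non-cyclic $H_1(Y)$ there may be several subgroups of order $m$, and a fixed character need not vanish on all of them, so one could not conclude that $\varphi$ factors through an \emph{arbitrary} rational homology ball bounded by $Y$; the uniqueness of the index-$m$ subgroup in the cyclic case is precisely what removes this obstruction. Everything else is a routine application of Theorem~\ref{t:mainintro}.
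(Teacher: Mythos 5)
Your argument is correct and is essentially the paper's proof: both identify $\ker i_*$ as the unique order-$m$ subgroup $m\cdot H_1(Y)$ of the cyclic group $H_1(Y)$ (using that the image of $i_*$ has order $m$, so the kernel has order $m$), observe that $k \mid m$ forces $\phi$ to vanish on it, and then extend the induced character on $i_*(H_1(Y))$ to all of $H_1(W)$ so that Theorem~\ref{t:mainintro} applies. One small caveat: the inclusion-induced map $i_*\colon H_1(Y)\to H_1(W)$ need \emph{not} be surjective for a general rational homology ball (its cokernel injects into $H_1(W,Y)\cong H^3(W)$, which can be nonzero torsion), so the extension step genuinely requires the divisibility of $\C^*$ that you mention only parenthetically --- this is precisely how the paper argues, extending $\psi_0$ from $i_*(H_1(Y))$ to $H_1(W)$ using that $\Q/\Z$ is an injective $\Z$-module.
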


In order to produce specific examples we need an efficient way to compute Casson--Gordon invariants. Using work of Cimasoni and Florens and focusing on 3-manifolds obtained via Dehn surgery on knots we reduce our problem to a computation of Levine--Tristram signatures. We denote by $S^{3}_{r}(K)$ the manifold obtained by  performing a surgery of slope $r$ on a knot $K\subset S^{3}$ and by $\sigma_{K}(\omega)$ the Levine--Tristram signature of $K$ evaluated at $\omega$.

\begin{prop}
If $S^3_{m^2}(K)$ bounds a rational homology ball $W$ with one $1$-handle and no $3$-handles, then $|1-\sigma_K(e^{2a\pi i/m})-2a(m-a)| \le 1$ for every $1\le a < m$ such that $(a,m)=1$.
\end{prop}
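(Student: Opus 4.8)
The plan is to apply the corollary to $Y = S^3_{m^2}(K)$ and then compute the relevant Casson--Gordon invariant explicitly in terms of the Levine--Tristram signature of $K$.

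First I would check the hypotheses of the corollary. Integral surgery of slope $m^2$ on $K$ gives $H_1(Y) \cong \Z/m^2$, generated by the meridian $\mu$; for each $a$ with $1 \le a < m$ and $(a,m)=1$ the assignment $\mu \mapsto e^{2a\pi i/m}$ defines a nontrivial character $\varphi_a : H_1(Y) \to C_m$, and $m \mid m$, so the corollary applies verbatim. A handle decomposition of $W$ with one $1$-handle and no $3$-handles has exactly one odd-index handle, so the corollary gives $1 \ge |\sigma(Y,\varphi_a)| - 1$, that is, $|\sigma(Y,\varphi_a)| \le 2$.

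The main work is to identify $\sigma(Y,\varphi_a)$. Here I would use the Cimasoni--Florens surgery formula to express the Casson--Gordon invariant as the twisted signature of a $4$-manifold bounding $(Y,\varphi_a)$, built from a Seifert surface of $K$ capped off with the surgery $2$-handle. This signature splits into the contribution of the Seifert surface, which is the Levine--Tristram signature $\sigma_K(e^{2a\pi i/m})$, and the contribution of the framed $2$-handle, which is a lens-space/branched-cover correction computed via the $G$-signature theorem and evaluates to the integer $2a(m-a)$, plus an additive constant $1$ coming from the untwisted term. The outcome is the identity $\sigma(Y,\varphi_a) = 1 - \sigma_K(e^{2a\pi i/m}) - 2a(m-a)$, where the overall sign is immaterial below.

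Combining the two steps gives $|1 - \sigma_K(e^{2a\pi i/m}) - 2a(m-a)| = |\sigma(Y,\varphi_a)| \le 2$. To sharpen this to $\le 1$ I would invoke parity: $2a(m-a)$ is even and $\sigma_K(e^{2a\pi i/m})$ is even (the Levine--Tristram form has even rank $2g$ and is nonsingular off the roots of $\Delta_K$), so $1 - \sigma_K - 2a(m-a)$ is an odd integer, and an odd integer of absolute value at most $2$ has absolute value $1$. I expect the genuine obstacle to be the equivariant signature bookkeeping in the middle step --- setting up the correct bounding $4$-manifold and showing that the non-Seifert contribution is exactly the closed form $2a(m-a)$ with the right additive constant --- whereas the signature of $K$ itself is standard; a minor additional point is the evenness of $\sigma_K$ at a root of $\Delta_K$, which is handled by the averaging convention for the signature.
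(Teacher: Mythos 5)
Your reduction to the Cimasoni--Florens formula matches the paper's, and the identity $\sigma(Y,\varphi_a)=\sigma_K(e^{2a\pi i/m})-1+2a(m-a)$ is the right one. The genuine gap is in your final step, sharpening $|\sigma(Y,\varphi_a)|\le 2$ to $\le 1$ by parity. That argument needs $\sigma_K(e^{2a\pi i/m})$ to be even, but this fails exactly when $e^{2a\pi i/m}$ is a root of the Alexander polynomial: with the averaging convention the signature there is the mean of the two adjacent even values, and when the jump is $\equiv 2 \pmod 4$ that mean is odd. This is not a corner case one can wave away --- it is the case of interest. In the paper's first example, $\sigma_{T_{4,25}}(e^{i\pi/5})=-15$ is odd and $|1-\sigma_K(e^{i\pi/5})-2\cdot 1\cdot 9|=|1+15-18|=2$, so the quantity you claim is an odd integer is in fact even, and the bound $\le 1$ does not follow from $\le 2$. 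Worse, the entire point of the proposition is that the sharper bound $\le 1$ obstructs precisely those examples where the value $2$ is attained; a bound of $2$ would be vacuous there.

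The paper obtains $|\sigma(Y,\varphi_a)|\le 1$ by a different mechanism, not by applying the general corollary and then improving it. Since $W$ has a single $1$-handle, $\pi_1(W)$ is cyclic, so the extension $\psi$ of $\varphi_a$ to $H_1(W)$ can be chosen to be injective; the cover associated to $\psi$ is then the universal cover of $W$, which forces $b_1^\psi(W)=0$. The twisted long exact sequence of the pair $(W,Y)$ together with Poincar\'e--Lefschetz duality gives $b_3^\psi(W)=0$ as well, and then $\chi^\psi(W)=\chi(W)=1$ forces $b_2^\psi(W)=1$, so the twisted intersection form lives on a one-dimensional space and $|\sigma^\psi(W)|=|\sigma(Y,\varphi_a)|\le 1$. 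This use of the cyclicity of $\pi_1(W)$ is the idea your proposal is missing, and it cannot be replaced by the parity observation.
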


Note that $S^3_{m^2}(K)$ bounds a rational homology ball with one 1-handle and no 3-handles if and only if it can be obtained via Dehn surgery on a knot in $S^1\times S^2$ and therefore we obtain an obstruction for this last property as well.

The examples obtained include the following:
\begin{itemize}
\item the connected sum of lens spaces $L(25,21)\# L(4,3)$ bounds no rational homology ball with a single 1-handle and it bounds one built with two 1-handles and two 2-handles;
\item the 3-manifold $S^3_{400}(T_{4,25;2,201})$ (here $T_{4,25;2,201}$ is the $(2,201)$-cable of the torus knot $T_{4,25}$) bounds no rational homology ball with a single 1-handle and it bounds one built with three 1-handles and three 2-handles.
\end{itemize}

Finally we use this machinery to provide lower bounds on the \emph{fusion number} of ribbon knots, 
i.e.\ the minimal number of 1-handles used to construct a ribbon disc (see Corollary~\ref{c:ribbon}).
This is done by looking at the double cover of the 4-ball, branched over any ribbon disc; this is well known to be a rational homology ball built only with handles of index at most 2. Moreover, the number of 1-handles used is the number of bands in the ribbon disc, and hence we can apply Theorem~\ref{t:mainintro} to give a bound on the number of bands.
In fact, other bounds can be given by looking at cyclic covers whose order is a prime power.

\subsection*{Organisation of the paper}
In Section \ref{main} we develop the lower bounds based on Casson--Gordon signatures and, in specific situations, we relate these invariants to Tristram--Levine signatures. In Section \ref{examples} we give some examples.

\subsection*{Acknowledgments} This work began when the first and third authors were visiting
Uppsala University; part of this work was carried out while the authors were at the Centro de Giorgi in Pisa; we would like to thank both institutions for their hospitality.
The first author is supported by the ERC Advanced Grant LDTBud.
The second author acknowledges support from the Alice and Knut Wallenberg Foundation and from the European Research Council (ERC) under the European Union's Horizon 2020 research and innovation programme (grant agreement No 674978).
The third author is partially supported by the Spanish GEOR MTM2011-22435.
We would also like to thank Maciej Borodzik, Marco Marengon, Brendan Owens, and Andr\'as Stipsicz for stimulating conversations, and the referee for their useful suggestions.


\section{Casson--Gordon signatures and handle decompositions}\label{main}

We briefly recall the definition of Casson--Gordon signature invariants~\cite{CassonGordon} and set up some notation.
In what follows $H_*(X)$ will denote the homology of $X$ with integer coefficients and $\Zm$ the cyclic group of $m$ elements.

Let $(Y,\phi)$ be a rational homology 3-sphere with a multiplicative character $\phi: H_1(Y)\to C_m \subset \C^*$.
Since the bordism group $\Omega_3(K(\Zm,1))$ is finite for each $m\in\Z_+$, there is $r\in\Z_{+}$ such that $r$ copies of $(Y,\phi)$ bound a pair $(X,\psi)$, where $X$ is a 4-manifold and $\psi: H_1(X)\to\Zm\subset\C^{*}$ restricts to $\phi$ on each of the $r$ boundary components.
Note that we make no assumption that $\phi$ is surjective onto $C_m\subset \C^*$.

Let $\tilde X$ denote the $m$-fold cover of $X$ corresponding to $\psi$ with group of deck transformations isomorphic to $\Zm$. This action induces a $\Z[\Zm]$-module structure on $H_{2}(\tilde X)$. Recall that given $\zeta_{m}$, a primitive root of unity of order $m$, the cyclotomic field $\Q(\zeta_{m})$ is a natural $\Z[\Zm]$-module and we can define the twisted homology group
\[
H_{2}^{\psi}(X;\Q(\zeta_{m})):=H_{2}(\tilde X;\Q)\otimes_{\Z[\Zm]}\Q(\zeta_{m}).
\]
This group admits a $\C$-valued Hermitian intersection form whose signature will be denoted by $\sigma^{\psi}(X)$. The signature of the standard intersection pairing on $H_{2}(X)$ will be denoted by $\sigma (X)$.
The \emph{Casson--Gordon signature invariant} of the pair $(Y,\phi)$ is given by the difference:
\begin{equation}\label{e:CGdef}
\sigma(Y,\phi) := \frac1r\left(\sigma^\psi(X) - \sigma(X)\right).
\end{equation}
Our main result provides a bound on the complexity of rational homology balls in terms of their handle decompositions. The proof of this result is very similar in nature to the one in the original paper of Casson and Gordon~\cite[Theorem 1]{CassonGordon} but with a different application in mind.

\begin{thm}\label{t:technical}
Let $Y$ be a $3$-manifold that bounds a rational homology ball $W$, and let $\phi: H_1(Y)\to\C^*$ be a nontrivial character that factors through $H_1(W)$.
Every handle decomposition of $W$ contains at least $|\sigma(Y,\phi)|-1$ odd-index handles.
\end{thm}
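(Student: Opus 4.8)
The plan is to compute $\sigma(Y,\phi)$ directly from $W$ and then estimate the resulting twisted signature using a handle decomposition. Since $\phi$ factors through the finite group $H_1(W)$, it extends to a character $\psi\colon H_1(W)\to\C^*$ whose image lies in some cyclic group; enlarging the modulus if necessary I take $m$ to be the order of $\psi$ and regard $\phi$ as a (not necessarily surjective) character into $\Zm$, exactly as permitted by the definition of $\sigma(Y,\phi)$. The pair $(W,\psi)$ then exhibits a single copy of $(Y,\phi)$ as a boundary, so I may use $r=1$ and $X=W$ in the definition, obtaining
\[
\sigma(Y,\phi)=\sigma^{\psi}(W)-\sigma(W).
\]
Because $W$ is a rational homology ball, $H_2(W;\Q)=0$ and hence $\sigma(W)=0$, so $\sigma(Y,\phi)=\sigma^{\psi}(W)$. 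The first point to check carefully is that $W$ is an admissible choice of bounding manifold even when $\psi$ is non-surjective; this is precisely the independence of the Casson--Gordon invariant from the choice of $(X,\psi,r)$, which I would cite from \cite{CassonGordon}.

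Next I would bound $|\sigma^{\psi}(W)|$. The Hermitian intersection form lives on $H_2^{\psi}(W;\Q(\zeta_m))$, so $|\sigma^{\psi}(W)|$ is at most $b_2^\psi:=\dim_{\Q(\zeta_m)}H_2^{\psi}(W;\Q(\zeta_m))$. To control $b_2^\psi$ I would pass to the twisted cellular (equivalently handle) chain complex $C_*^{\psi}:=C_*(\tilde W;\Q)\otimes_{\Z[\Zm]}\Q(\zeta_m)$ over the field $\Q(\zeta_m)$, in which each $i$-handle of $W$ contributes exactly one generator, so that $\dim_{\Q(\zeta_m)}C_i^{\psi}$ equals the number of $i$-handles. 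Writing $b_i^{\psi}$ for the twisted Betti numbers, the alternating sum of the $b_i^\psi$ equals the alternating sum of the chain dimensions, giving
\[
\sum_i(-1)^i b_i^{\psi}=\chi(W)=1,
\]
since $W$ has the rational homology of a point.

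The two vanishing statements $b_0^{\psi}=0$ and $b_4^{\psi}=0$ then do the work. The former holds because $\psi$ is nontrivial: identifying $H_0(\tilde W;\Q)$ with the permutation module on the components of the cover, one finds $H_0^{\psi}(W;\Q(\zeta_m))=\Q(\zeta_m)\otimes_{\Z[H]}\Q=0$, where $H=\mathrm{im}\,\psi\neq 0$ acts on $\Q(\zeta_m)$ by nontrivial roots of unity, so that $\omega-1$ is invertible for a suitable $\omega$. The latter holds because $\tilde W$ is a compact $4$-manifold with nonempty boundary, whence $H_4(\tilde W;\Q)=0$. The Euler characteristic identity then collapses to $b_2^{\psi}=1+b_1^{\psi}+b_3^{\psi}$, and since the dimension of homology never exceeds the dimension of the chains, $b_1^{\psi}$ and $b_3^{\psi}$ are at most the numbers of $1$- and $3$-handles. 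Combining,
\[
|\sigma(Y,\phi)|-1\le b_2^\psi-1=b_1^{\psi}+b_3^{\psi}\le(\#\,1\text{-handles})+(\#\,3\text{-handles}),
\]
which is the number of odd-index handles. I expect the main care to be needed in justifying the use of $W$ with a possibly non-surjective character in the definition, together with the clean identification of the twisted chain complex with the handle chain complex; the remaining steps are routine linear algebra and Euler-characteristic bookkeeping.
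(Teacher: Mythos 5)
Your proposal is correct and follows essentially the same route as the paper's proof: extend $\phi$ to $\psi$ on $H_1(W)$, compute $\sigma(Y,\phi)=\sigma^{\psi}(W)$ with $r=1$ and $\sigma(W)=0$, bound $|\sigma^{\psi}(W)|$ by $b_2^{\psi}$, and use $\chi^{\psi}(W)=\chi(W)=1$ together with $b_0^{\psi}=b_4^{\psi}=0$ and $b_i^{\psi}\le n_i$ to get $b_2^{\psi}=1+b_1^{\psi}+b_3^{\psi}\le 1+n_1+n_3$. The only cosmetic difference is that you make explicit the vanishing of $b_4^{\psi}$ and the independence of the invariant from the choice of bounding pair, which the paper leaves implicit.
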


\begin{proof}
Let $\psi: H_1(W)\to \C^*$ be a character that extends $\phi$, namely, $\phi = \psi\circ i_*$ where $i: Y \hookrightarrow W$ is the inclusion. We shall use the manifold $W$ to compute $\sigma(Y,\phi)$ as in \eqref{e:CGdef}. In this case $r=1$ and since $W$ is a rational homology ball, $H_2(W)$ is torsion and hence $\sigma(W) = 0$. Therefore, in \eqref{e:CGdef} we are only concerned with the first summand, $\sigma^\psi(W)$.

We denote by $\tilde W$ the covering associated to $\psi$ and fix $m$ to be the order of $\psi(H_{1}(W))\subset\C^{*}$. Any cell decomposition of $W$ induces a chain complex of the covering $C_*(\tilde W)$, which we view as generated over $\Z[\Zm]$ by one lift of each cell in the given decomposition of $W$.
The module structure allows us to consider the twisted chain complex
\[
C^{\psi}_k(W;\Q(\zeta_{m})) := C_k(\tilde W)\otimes_{\Z[\Zm]}\Q(\zeta_{m}),
\]
with associated homology $H^\psi_*(W;\Q(\zeta_{m}))$ and Euler characteristic $\chi^{\psi}(W)$. Note that, since $W$ is a rational homology ball and has $\chi(W) = 1$, then also $\chi^\psi(W) = 1$. Moreover, observe that the $k$-th twisted Betti number $b_k^\psi(W) := \rk H_k^\psi(W;\Q(\zeta_{m}))$ of $W$ is bounded from above by the number $n_k$ of $k$-cells in the decomposition of $W$.

The quantity $\sigma^\psi(W)$ in formula \eqref{e:CGdef} is obviously bounded by $b_2^\psi(W)$ and since $\psi$ is nontrivial by assumption, $H_0^\psi(W;\Q(\zeta_{m})) = 0$. Therefore, since 
\[
1=\chi^\psi(W)=- b_1^\psi(W) + b_2^\psi(W) - b_3^\psi(W),
\]
we have
\[
|\sigma(Y,\phi)|=|\sigma^\psi(W)| \le b_2^\psi(W) = 1+b_1^\psi(W) + b_3^\psi(W) \le 1+n_1+n_3,
\]
as desired.
\end{proof}

The statement of Theorem~\ref{t:technical} requires that a character defined on a 3-manifold extend over a rational homology 4-ball. In Corollary~\ref{t:cyclicH1} we deal with a particular case in which the character automatically extends and in Corollary~\ref{c:ribbon} we give a bound on fusion number of ribbon knots. 

\begin{cor}\label{t:cyclicH1}
Let $Y$ be a $3$-manifold with $H_1(Y)$ cyclic of order $m^2$ and $\phi:H_1(Y) \to \Zk$ be a nontrivial character with $k | m$.
If $W$ is a rational homology ball with $\de W = Y$, then every handle decomposition of $W$ contains at least $|\sigma(Y,\phi)|-1$ odd-index handles.
\end{cor}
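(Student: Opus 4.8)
The plan is to deduce this from Theorem~\ref{t:technical}: the only hypothesis of that theorem not already granted by the statement is that $\phi$ factors through $H_1(W)$, so the whole task reduces to checking that, in the cyclic setting, this factorisation is automatic. Concretely I must produce a character $\psi: H_1(W)\to\C^*$ with $\phi = \psi\circ i_*$, where $i: Y\hookrightarrow W$ is the inclusion.

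First I would record an algebraic reduction. Since $\C^*$ is a divisible abelian group, it is an injective $\Z$-module, so any homomorphism defined on a subgroup of $H_1(W)$ extends over all of $H_1(W)$. Hence $\phi$ factors through $i_*$ as soon as $\phi$ vanishes on $\ker i_*$: one first obtains an induced character on $\operatorname{im}(i_*)\cong H_1(Y)/\ker i_*$, and then extends it across the inclusion $\operatorname{im}(i_*)\hookrightarrow H_1(W)$ using injectivity. It therefore suffices to prove the containment $\ker(i_*\colon H_1(Y)\to H_1(W))\subseteq\ker\phi$.

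Next I would pin down $K:=\ker i_*$, and the claim is that $|K|=m$. This is the standard \emph{metabolizer} fact for a rational homology sphere bounding a rational homology ball, and it falls out of the long exact sequence of the pair $(W,Y)$ together with Poincar\'e--Lefschetz duality and universal coefficients, i.e.\ exactly the order computation alluded to just before the statement. In outline: duality and universal coefficients give $H_3(W)=0$, $|H_2(W,Y)|=|H_1(W)|$ and $|H_1(W,Y)|=|H_2(W)|$, while $H_2(Y)=0$ forces the map $H_2(W)\to H_2(W,Y)$ to be injective; chasing orders through the sequence then yields $|K|^2=|H_1(Y)|=m^2$, whence $|K|=m$. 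I expect this order-counting to be the only real content of the proof.

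Finally I would combine this with the cyclic hypothesis. Since $H_1(Y)\cong\Z/m^2\Z$ is cyclic it has a unique subgroup of each order dividing $m^2$, so $K$ is precisely the order-$m$ subgroup $m\,\Z/m^2\Z$. The image of $\phi$ lies in $\Zk$ with $k\mid m$, so $\operatorname{im}\phi$ is cyclic of some order $d\mid k\mid m$ and $\ker\phi$ is the subgroup of index $d$, of order $m^2/d$. Because $m \mid m^2/d$ (as $d\mid m$), the order-$m$ subgroup $K$ is contained in $\ker\phi$, which is exactly what the reduction required; applying Theorem~\ref{t:technical} then finishes the argument.
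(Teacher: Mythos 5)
Your proposal is correct and follows essentially the same route as the paper: reduce to Theorem~\ref{t:technical}, use the long exact sequence of $(W,Y)$ to show $\ker i_*$ has order $m$, invoke cyclicity of $H_1(Y)$ to conclude $\ker i_*\subseteq\ker\phi$, and extend the induced character on $\operatorname{im}(i_*)$ to $H_1(W)$ via injectivity of a divisible abelian group (the paper uses $\Q/\Z$ where you use $\C^*$, an immaterial difference). Your write-up in fact supplies slightly more detail than the paper's on the order-counting and on why the unique order-$m$ subgroup sits inside $\ker\phi$.
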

\begin{proof}
Using the long exact sequence for the pair $(W,Y)$, it is not difficult to show that the image of $i_*: H_1(Y)\to H_1(W)$ has order $m$. It follows that, whenever $k | m$, $\ker i_{*} \subset \ker\phi$, and hence $\phi$ factors through the image of $i_*$, giving $\psi_0: i_*(H_1(Y)) \to \Q/\Z$, where we look at $\Q/\Z$ as the set of roots of unity in $\C^*$.
Since $\Q/\Z$ is an injective $\Z$-module, we can extend $\psi_0$ to $H_1(W)$, hence obtaining an extension $\psi:H_1(W)\to \Q/\Z \subset \C^*$.

Therefore, the assumptions of Theorem~\ref{t:technical} are satisfied, and the result follows.
\end{proof}

We now turn to give a lower bound on the fusion number
of a ribbon knot. In what follows, given a knot $K\subset S^3$, we will denote with $\det K$ the determinant of $K$ and with $\Sigma(K)$ the double cover of $S^3$ branched over $K$.

\begin{cor}\label{c:ribbon}
Let $K$ be a ribbon knot with fusion number $b$. Then
\[
b \ge \min_H \max_\phi |\sigma(\Sigma(K),\phi)| - 1,
\]
where the minimum is taken over all subgroups $H < H_1(\Sigma(K))$ of index $\sqrt{|{\det K}|}$ and the maximum is taken over all characters $\phi$ whose kernel contains $H$.
\end{cor}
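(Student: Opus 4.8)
The plan is to reduce the statement to an application of Theorem~\ref{t:technical} by passing to the branched double cover of a ribbon disc. Let me think about how this works.

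Given a ribbon knot $K$ with fusion number $b$, there is a ribbon disc $D \subset B^4$ built with $b$ bands (1-handles). The standard object to examine is $W = \Sigma(B^4, D)$, the double cover of $B^4$ branched over $D$. Known facts (as recalled in the introduction): $W$ is a rational homology ball, $\partial W = \Sigma(K)$, and $W$ admits a handle decomposition with handles of index at most $2$ in which the number of $1$-handles equals the number of bands $b$. Since there are no $3$-handles, the total number of odd-index handles is exactly $b$.

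Now I want to apply Theorem~\ref{t:technical}. The theorem gives, for any nontrivial character $\phi$ on $\partial W = \Sigma(K)$ that factors through $H_1(W)$, the bound $b \ge |\sigma(\Sigma(K),\phi)| - 1$. So the whole task is to identify which characters factor through this particular $W$, and then optimize. The key algebraic input is the identification of $\ker(i_* : H_1(\Sigma(K)) \to H_1(W))$. For the branched double cover of a ribbon disc, the image $i_*(H_1(\Sigma(K)))$ has order $\sqrt{|\det K|}$ (since $|H_1(\Sigma(K))| = |\det K|$ and $W$ being a rational homology ball forces the image to have order equal to the square root of the order of the boundary homology, exactly as in the computation in the proof of Corollary~\ref{t:cyclicH1}). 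Set $H := \ker i_*$; this is a subgroup of index $\sqrt{|\det K|}$.

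A character $\phi$ on $H_1(\Sigma(K))$ factors through $H_1(W)$ precisely when $\ker\phi \supseteq H = \ker i_*$: indeed, if $H \subseteq \ker\phi$ then $\phi$ descends to $i_*(H_1(\Sigma(K)))$, and since $\C^*$ (or $\Q/\Z \subset \C^*$) is an injective $\Z$-module the descended character extends to all of $H_1(W)$, exactly as in Corollary~\ref{t:cyclicH1}; conversely any $\phi$ factoring through $i_*$ must kill $\ker i_*$. Thus for every character $\phi$ with $\ker\phi \supseteq H$ the hypotheses of Theorem~\ref{t:technical} hold, giving
\[
b \ge |\sigma(\Sigma(K),\phi)| - 1.
\]
Since this holds for every admissible $\phi$, we may take the maximum over all such characters, obtaining $b \ge \max_{\phi}|\sigma(\Sigma(K),\phi)| - 1$ for this specific subgroup $H$. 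Finally, because we do not know a priori which index-$\sqrt{|\det K|}$ subgroup arises as $\ker i_*$ for the optimal ribbon disc realizing the fusion number, we weaken the bound by taking the minimum over all subgroups $H$ of that index, yielding
\[
b \ge \min_H \max_\phi |\sigma(\Sigma(K),\phi)| - 1,
\]
which is the claim.

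The main obstacle, and the step that deserves care, is the homological bookkeeping: verifying that $i_*(H_1(\Sigma(K)))$ has order exactly $\sqrt{|\det K|}$, equivalently that $\ker i_*$ has index $\sqrt{|\det K|}$. This follows from the long exact sequence of $(W,\Sigma(K))$ together with Poincar\'e--Lefschetz duality and the fact that $W$ is a rational homology ball, in the same manner as the order-$m$ computation invoked in the proof of Corollary~\ref{t:cyclicH1} with $m = \sqrt{|\det K|}$; the self-annihilation of $\ker i_*$ under the linking form is what pins down the index. Everything else is a direct translation of the branched-cover construction and an invocation of Theorem~\ref{t:technical}, with the final $\min_H$ absorbing the ambiguity in which subgroup the chosen disc produces.
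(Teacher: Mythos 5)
Your proposal is correct and follows essentially the same route as the paper: pass to the double branched cover $W$ of the ribbon disc realizing the fusion number, note it is a rational homology ball with $b$ one-handles and no three-handles, apply Theorem~\ref{t:technical} to every character killing $\ker i_*$ (which has index $\sqrt{|\det K|}$), and absorb the ambiguity in which metabolizer occurs by minimizing over all such subgroups. The only cosmetic difference is that the paper phrases the index computation via surjectivity of $i_*$ and $|H_1(W)|=\sqrt{|\det K|}$ (using the absence of $3$-handles), whereas you argue directly with the order of the image, which is equally valid.
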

%
\begin{proof}
Let $D$ be a disk in the 4-ball realizing the fusion number for $K$.
The double cover of $B^{4}$ branched over $D$
is a rational homology ball $W$ bounded by $\Sigma(K)$, the double cover of $S^{3}$ branched over $K$.
By Theorem~\ref{t:technical}, from every character $\phi$ defined on $H_{1}(\Sigma(K))$ which extends over the rational homology ball we obtain a lower bound, $|\sigma(Y,\phi)|-1$, on the number of odd-index handles in any decomposition of $W$. 
Now, since $D$ is a ribbon disk, $W$ can be built with no 3-handles and with only $b$ 1-handles and thus, we obtain
\begin{equation}\label{e:ineq1}
b\ge \max\{|\sigma(Y,\phi)| - 1\,:\, \phi\ {\mathrm{factors\ through}}\ H_{1}(W)\}.
\end{equation}

It is well known that the order of $H_{1}(\Sigma(K))$ is equal to $|{\det K}|$ and, since there are no $3$-handles in $W$, the order of $H_{1}(W)$ equals $\sqrt{|{\det K}|}$ and 
the map $i_{*}: H_{1}(\Sigma(K))\rightarrow H_{1}(W)$ induced by the inclusion $i: \Sigma(K)\hookrightarrow W$ is a surjection.

Notice that the character $\phi$ factors through $H_{1}(W)$ if and only if $\ker i_{*}\subset\ker\phi$. For this to happen it is necessary that $\phi$ vanishes on a subgroup of order $\sqrt{|{\det K}|}$.
In order to give an obstruction on all possible rational homology balls, we need to minimize the right hand side in~\eqref{e:ineq1}  over all subgroups $H < H_1(\Sigma(K))$ of index $\sqrt{|{\det K}|}$.
\end{proof}

\begin{rmk}
A more general statement can be given in terms of cyclic branched covers of order a prime power $q = p^h$; that we denote with $\Sigma_q(K)$. In this more general setting, a ribbon disc with $b$ bands yields a rational homology 4-ball built with $(q-1)b$ 1-handles, and therefore one obtains
\begin{equation*}
b\ge \max_q \left\{\frac1{q-1} \min_H \max_\phi |\sigma(\Sigma_q(K),\phi)| - 1\right\},
\end{equation*}
where the outer maximum is taken over all prime powers $q$, the minimum is taken over all subgroups $H < H_1(\Sigma_q(K))$ of order divisible by $\sqrt{|H_1(\Sigma_q(K))|}$, and the inner maximum is taken over all characters $\phi$ that vanish on $H$.
\end{rmk}

If the character $\phi$ satisfying the assumptions of the above theorem is of prime power order, then by \cite{CassonGordon} we know that $|\sigma(\Sigma(K),\phi)|\leq 1$ and therefore we obtain no bound on $b$. Corollary~\ref{c:ribbon} is of interest when the order of the character is not a prime power and examples of non trivial bounds will be discussed in the next section.

We now focus on the special case of 3-manifolds $Y$ obtained as surgery on a knot $K\subset S^{3}$. We shall denote such manifolds as $S^{3}_{r}(K)$, where $r\in\Q$ is the surgery coefficient.
For this class of manifolds we will give a bound on the complexity of a rational homology ball bounded by $Y$ in terms of $\sigma_{K}(\omega_{m})$, the Levine--Tristram signature of the knot $K$ evaluated at a primitive root of unity of order $m$. The transition from Casson--Gordon invariants to the Levine--Tristram signature is done through work of Cimasoni--Florens~\cite[Theorem~6.7]{CimasoniFlorens}, which we now briefly recall. Consider a 3-manifold $Y$ obtained by surgery on a $\nu$-component framed link $L$ with linking matrix $\Lambda$ and a character $\phi:H_{1}(Y)\rightarrow \C^{*}$ mapping the meridian of the $i$-th component of $L$ to $\omega_{m}^{n_{i}}$, where $n_{i}$ is coprime to $m$. Set $\alpha=(\omega_{m}^{n_{1}},\dots,\omega_{m}^{n_{\nu}})$ and denote by $\sigma_{L}(\alpha)$ the coloured signature of $L$ evaluated on $\alpha$. Then, we have
\begin{equation}\label{e:CF}
\sigma(Y,\phi)=\sigma_{L}(\alpha)-\sum_{i<j}\Lambda_{ij}-\mathrm{sign}(\Lambda)+\frac{2}{m^{2}}\sum_{i,j}(m-n_{i})n_{j}\Lambda_{ij}.
\end{equation}

\begin{prop}\label{t:surgery}
If $S^3_{m^2}(K)$, with $m>1$, bounds a rational homology ball $W$ with one $1$-handle, then $|1-\sigma_K(e^{2a\pi i/m})-2a(m-a)| \le 1$ for every $1\le a < m$ such that $(a,m)=1$.
\end{prop}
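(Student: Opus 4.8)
The plan is to realise $Y = S^3_{m^2}(K)$ as surgery on the single framed knot $K$, so that the surgery link $L = K$ has $\nu = 1$ component and linking matrix $\Lambda = (m^2)$, and to test the hypothesis against the family of characters $\phi_a\colon H_1(Y)\to\C^*$ sending the meridian $\mu$ of $K$ to $e^{2\pi i a/m} = \omega_m^a$, for each $1\le a < m$ with $(a,m)=1$. Here $H_1(Y)\cong\Z/m^2$ is generated by $\mu$, and since $(a,m)=1$ the character $\phi_a$ has image exactly $\Zm$, i.e.\ order $m$; as $m \mid m$, Corollary~\ref{t:cyclicH1} applies and $\phi_a$ extends to a character $\psi\colon H_1(W)\to\C^*$, so the hypotheses of Theorem~\ref{t:technical} are met for each such $a$. (As in the version stated in the introduction, I take $W$ to have one $1$-handle and no $3$-handles.)

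Next I would compute $\sigma(Y,\phi_a)$ explicitly through the Cimasoni--Florens formula~\eqref{e:CF}. With $\nu = 1$ there are no off-diagonal entries, so $\sum_{i<j}\Lambda_{ij} = 0$; the linking matrix is the positive number $m^2$, whence $\mathrm{sign}(\Lambda) = 1$; the coloured signature reduces to the Levine--Tristram signature $\sigma_K(\omega_m^a)$; and the final sum collapses to $\tfrac{2}{m^2}(m-a)a\,m^2 = 2a(m-a)$. Substituting yields
\[
\sigma(Y,\phi_a) = \sigma_K(e^{2\pi i a/m}) - 1 + 2a(m-a),
\]
so that $|\sigma(Y,\phi_a)| = |1 - \sigma_K(e^{2a\pi i/m}) - 2a(m-a)|$, which is exactly the quantity to be bounded. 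It therefore suffices to establish $|\sigma(Y,\phi_a)|\le 1$.

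The heart of the argument is the handle-theoretic bound, and here a naive application of Theorem~\ref{t:technical} is insufficient: with one $1$-handle and no $3$-handles it only gives $|\sigma(Y,\phi_a)|\le 1 + n_1 + n_3 = 2$. The refinement I would exploit is that the single $1$-handle contributes nothing to the twisted first Betti number. Indeed, with one $0$-handle and one $1$-handle the first homology of the $1$-skeleton is $\Z$, generated by the core loop $\gamma$ of the $1$-handle; since there are no $3$-handles, $H_1(W)\cong\Z/m$ and $\gamma$ represents a generator, so $\psi([\gamma])$ is a primitive $m$-th root of unity, and in particular $\psi([\gamma])\ne 1$. Consequently the twisted boundary map $\partial_1\colon C_1^\psi(W;\Q(\zeta_m))\to C_0^\psi(W;\Q(\zeta_m))$ is multiplication by the nonzero scalar $\psi([\gamma]) - 1$, hence injective, forcing $b_1^\psi(W) = 0$. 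Feeding $b_1^\psi(W)=0$ and $b_3^\psi(W)=0$ into $\chi^\psi(W)=1$ as in the proof of Theorem~\ref{t:technical} gives $b_2^\psi(W)=1$, and therefore $|\sigma(Y,\phi_a)| = |\sigma^\psi(W)|\le b_2^\psi(W) = 1$, as required.

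I expect the main obstacle to be precisely this last step: justifying $b_1^\psi(W)=0$ rather than the weaker $b_1^\psi(W)\le n_1 = 1$. This rests on identifying the class of the $1$-handle's core with a generator of $H_1(W)\cong\Z/m$ and on checking that $\psi$ is injective on $H_1(W)$ — equivalently that $\psi\bigl(H_1(W)\bigr)$ has order $m$ — so that the twisted $\partial_1$ is genuinely nonzero; note that the absence of $3$-handles is used twice, both to pin down $|H_1(W)| = m$ and to kill $b_3^\psi(W)$. A secondary point requiring care is the bookkeeping in~\eqref{e:CF}, in particular the coprimality condition $(a,m)=1$ needed to invoke Cimasoni--Florens and the sign conventions that convert $\sigma(Y,\phi_a)$ into the stated absolute value.
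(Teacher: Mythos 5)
Your reduction to the Cimasoni--Florens formula is exactly the paper's, and your observation that the single $1$-handle forces $b_1^\psi(W)=0$ is sound: the twisted $\partial_1$ is multiplication by $\psi([\gamma])-1\neq 0$ because the core $[\gamma]$ of the $1$-handle generates the cyclic group $H_1(W)$ and $\psi$ is nontrivial. (The paper reaches the same conclusion by noting that an injective $\psi$ on the cyclic group $\pi_1(W)$ makes the associated cover the universal cover.) The genuine gap is your treatment of $3$-handles. Proposition~\ref{t:surgery} as stated assumes only that $W$ has one $1$-handle; you have imported the extra hypothesis ``no $3$-handles'' from the weaker version stated in the introduction, and you use it twice --- once to conclude $b_3^\psi(W)=0$ and once to identify $H_1(W)$ with $\Z/m$. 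The second use is harmless (nontriviality of $\psi$ on the generator $[\gamma]$ is all you need for $b_1^\psi(W)=0$, whatever the order of $H_1(W)$ is), but the first is exactly the point where your argument stops short of the stated proposition.

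The paper closes this without any hypothesis on $3$-handles: having shown $H_1^\psi(W;\Q(\zeta_n))=0$, it runs the twisted long exact sequence of the pair $(W,Y)$,
\[
0=H_1^\psi(W;\Q(\zeta_n))\longrightarrow H_1^\psi(W,Y;\Q(\zeta_n))\longrightarrow H_0^\psi(Y;\Q(\zeta_n))=0,
\]
where the last group vanishes because $\phi$ is a nontrivial character of $Y$; hence $H_1^\psi(W,Y;\Q(\zeta_n))=0$, and Poincar\'e--Lefschetz duality then gives $H^3_\psi(W;\Q(\zeta_n))=0$, i.e.\ $b_3^\psi(W)=0$. Feeding this into $\chi^\psi(W)=1$ recovers $b_2^\psi(W)=1$ and $|\sigma(Y,\phi)|\le 1$ for an arbitrary rational homology ball with one $1$-handle (indeed, for any $W$ with cyclic $\pi_1$, as the paper remarks). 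If you replace your appeal to the absence of $3$-handles by this duality step, your proof becomes a complete and essentially equivalent argument; as written, it only establishes the weaker version of the statement given in the introduction.
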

\begin{proof}
Let us call $Y=S^3_{m^2}(K)$, and fix a character $\phi: H_1(Y)\to\C^*$ of order $m$.
Since $H_1(Y)$ is cyclic, it follows from the long exact sequence of the pair $(W,Y)$ that $\ker\phi = \ker(i_*:H_1(Y)\to H_1(W))$.
Therefore, $\phi$ extends to a character $\psi: H_1(W)\to \C^*$ (see the proof of Corollary~\ref{t:cyclicH1}).

Since $W$ is built with a single 1-handle, $\pi_1(W)=C_n$ is cyclic, and we can choose $\psi$ to be injective as follows.
Call $d = n/m$ the index of $i_*(H_1(Y))$ in $H_1(W)$, and consider the map $j: C_{m^2}\to C_n$, $j: 1 \mapsto d$.
We can fix identifications $H_1(Y) = C_{m^2}$ and $H_1(W) = C_n$ such that $i_*: H_1(Y) \to H_1(W)$ is identified with $j$, and $\phi$ maps $1\in C_{m^2}$ to $e^{2\pi i/m}$;
we can now choose the extension $\psi$ that maps $1$ to $e^{2\pi i/n}$, which is hence an isomorphism onto the group of $n$-th roots of unity in $\C^*$.

We will use $(W,\psi)$ to compute the Casson--Gordon signature invariant of the pair $(Y,\phi)$.
In this case in~\eqref{e:CGdef} we have $r=1$ and $\sigma(W)=0$, so $\sigma(Y,\phi)=\sigma^{\psi}(W)$.

We proceed now to estimate $|\sigma^{\psi}(W)|$.
Since $\psi$ is injective, and $\pi_1(W)$ is abelian, the cover associated to $\psi$ is the universal cover of $W$, and hence $H_1^\psi(W; \Q(\zeta_n)) = 0$.
The long exact sequence for the pair $(W,Y)$, twisted with $\psi$, gives:
\[
0 = H_1^\psi(W; \Q(\zeta_n)) \longrightarrow H_1^\psi(W,Y; \Q(\zeta_n)) \longrightarrow H_0^\psi(Y; \Q(\zeta_n)) = H_0^\phi(Y; \Q(\zeta_m))^{\oplus d},
\]
and the latter group vanishes since $\phi$ is a non-trivial character of $Y$.
Therefore, $H_1^\psi(W,Y;\Q(\zeta_n))$ is trivial, which in turn implies, by Poincar\'e--Lefschetz duality, that $H^3_\psi(W;\Q(\zeta_n)) = 0$, and hence $b^\psi_3(W; \Q(\zeta_n)) = 0$.
Finally, since $\psi$ is non trivial, we have $b^\psi_0(W)=0$. Now, since $W$ is a rational homology ball,
\[
1=\chi(W)=\chi^\psi(W) = b^\psi_0(W)-b^\psi_1(W)+b^\psi_2(W)-b^\psi_3(W)=b^\psi_2(W)
\]
and we obtain that $\dim H^\psi_2(W;\Q(\zeta_{m})) = 1$ and hence the signature $\sigma^\psi(W)$ of the equivariant intersection form is bounded by 1 in absolute value and thus $|\sigma(Y,\phi)|\le 1$. 

To finish the proof, we rewrite the Casson--Gordon signature invariant of $Y$ in terms of the Levine--Tristram signature of the surgery knot $K$ using~\eqref{e:CF}. To this end, identify $\Zm$ with the cyclic group generated by $\omega_{m}=e^{2i\pi/m}\in\C^{*}$ by sending $1\in\Zm$ to $e^{2i\pi/m}$ and denote by $\sigma_{K}(\cdot)$ the Levine--Tristram signature of the knot $K$. Recall that for knots the coloured and the Levine--Tristram signatures coincide. Let $\omega_{m}^{a}$ be the image of the meridian of $K$ under the character $\phi$. Finally, notice that the linking matrix of a framed knot is simply given by the framing. The statement of the proposition then follows from equation~\eqref{e:CF}, which in this simple case reads:
\[
|\sigma(Y,\phi)|=\left|\sigma_{K}(\omega_{m}^{a})-1+{\textstyle \frac{2}{m^{2}}}(m-a)am^{2}\right|.\qedhere
\]
\end{proof}

\begin{rmk}
In fact, the key property used in the proof is that $\pi_1(W)$ is cyclic; hence the statement holds under this assumption as well.
\end{rmk}

When a rational homology sphere $Y$ bounds a rational homology ball $W$, one can give a lower bound on the complexity of $W$ by looking at $H_1(Y)$.

\begin{prop}
If $Y$ bounds a rational homology ball $W$, and $H_1(Y)$ is generated by no fewer than $g$ generators, every handle decomposition of $W$ contains at least $\lceil g/2 \rceil$ $1$-handles.
\end{prop}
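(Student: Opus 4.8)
The plan is to extract the bound directly from the algebraic structure of a handle decomposition, using the fact that $H_1(Y)$ injects appropriate information into the chain complex of $W$. Suppose $W$ is built with $n_1$ one-handles (and some number of handles of other indices). The key observation is that $H_1(W)$ is a quotient of the free abelian group generated by the $1$-handles, so $H_1(W)$ is generated by at most $n_1$ elements. The strategy is then to relate the minimal number of generators of $H_1(Y)$ to that of $H_1(W)$ via the long exact sequence of the pair $(W,Y)$.

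First I would recall, as in the proof of Corollary~\ref{t:cyclicH1}, that since $W$ is a rational homology ball the boundary map $i_*: H_1(Y)\to H_1(W)$ has kernel and cokernel controlled by the long exact sequence: concretely, one shows that the image $i_*(H_1(Y))$ has index equal to the order of $H_1(W)$, and in fact $|H_1(W)|^2 = |H_1(Y)|$. The sharper point I need is that the rank (minimal number of generators) of $H_1(Y)$ is at most \emph{twice} the rank of $H_1(W)$. This follows because $H_1(Y)$ sits in an exact sequence relating it to $i_*(H_1(Y))\subseteq H_1(W)$ and to a subquotient of $H_2(W,Y)\cong H^2(W)$, which by universal coefficients and Poincaré--Lefschetz duality is again controlled by $H_1(W)$; hence $H_1(Y)$ is generated by at most $2\cdot(\text{rank } H_1(W))$ elements.

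Combining these, if $H_1(Y)$ requires at least $g$ generators, then $H_1(W)$ requires at least $\lceil g/2\rceil$ generators, and since $H_1(W)$ is a quotient of the free group on the $1$-handles we get $n_1 \ge \lceil g/2\rceil$, as desired.

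The main obstacle I expect is the second step: proving cleanly that the rank of $H_1(Y)$ is at most twice the rank of $H_1(W)$. One must be careful because $H_1(Y)$ may have a more complicated torsion structure than $i_*(H_1(Y))$, so the naive count of generators from the image alone is insufficient. The cleanest route is probably to use Poincaré duality on $Y$ together with the half-lives-half-dies principle for rational homology spheres bounding rational homology balls: the subgroup $\ker i_* = i_*(H_1(Y))^{\perp}$ is a \emph{metabolizer} for the linking form on $H_1(Y)$, so $H_1(Y)/\ker i_*\cong \ker i_*$ (as abelian groups, up to the linking-form pairing), and $H_1(Y)$ is an extension of $i_*(H_1(Y))$ by $\ker i_*$, both of which embed in $H_1(W)$. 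Since an extension of a group needing $\le s$ generators by a group needing $\le s$ generators needs $\le 2s$ generators, this yields the factor of $2$ and completes the argument.
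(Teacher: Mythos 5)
Your argument is correct, but it follows a genuinely different route from the paper's. The paper argues geometrically: it truncates the handle decomposition after the $2$-handles to get $W'$ with $\de W' = Y\# n_3(S^1\times S^2)$, converts the $1$-handles to $0$-framed $2$-handles to present $H_1(Y)\oplus\Z^{n_3}$ as the cokernel of an $(n_1+n_2)\times(n_1+n_2)$ linking matrix, and combines $n_1+n_2\ge g+n_3$ with $n_2=n_1+n_3$. You instead work purely algebraically: $H_1(W)$ needs at most $n_1$ generators, and $H_1(Y)$ is an extension of $i_*(H_1(Y))\le H_1(W)$ by $\ker i_*$, where $\ker i_*$ also needs at most $\rk H_1(W)$ generators --- either because it is a metabolizer (hence isomorphic to $H_1(Y)/\ker i_*\cong i_*(H_1(Y))$ via the linking form), or, more simply, because it is a quotient of $H_2(W,Y)\cong H^2(W)\cong H_1(W)$; the extension then needs at most $2\rk H_1(W)\le 2n_1$ generators. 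Both proofs are complete and of comparable length; yours isolates the purely group-theoretic content (the factor of $2$ comes from half-lives-half-dies), while the paper's surgery-presentation argument gives the slightly finer inequality $n_1+n_2-n_3\ge g$ along the way. One small correction: your parenthetical claims that $i_*(H_1(Y))$ has index $|H_1(W)|$ and that $|H_1(W)|^2=|H_1(Y)|$ are not right in general --- the correct statements are $|i_*(H_1(Y))|=|\ker i_*|=\sqrt{|H_1(Y)|}$, and $i_*$ need not be surjective (cf.\ the index-$d$ situation in Proposition~\ref{t:surgery}) --- but nothing in your main chain of reasoning uses these, so the proof stands.
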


\begin{proof}[Proof (sketch)]
Take a handle decomposition of $W$ with a single $0$-handle,
$n_1$ $1$-handles, $n_2$ $2$-handles, and $n_3$ $3$-handes.
Since $W$ is a rational homology ball, $n_2 = n_1+n_3$.
Consider the $4$-handlebody $W'$ obtained by attaching only the $1$- and $2$-handles of $W$:
by construction, $Y' := \de W' = Y\# n_3(S^1\times S^2)$, and therefore $H_1(Y') = H_1(Y) \oplus \Z^{n_3}$.
Now perform a dot-zero surgery along the core of each $1$-handle.
This presents $Y'$ as an integer surgery along a $(n_1+n_2)$-component link, and correspondingly presents $H_1(Y')$ as a quotient of $\Z^{n_1+n_2}$;
it follows that $n_1+n_2 \ge g + n_3$, hence $g \le n_1+n_2-n_3 = 2n_1$.
\end{proof}

The statement of Proposition~\ref{t:surgery} can be extended to surgeries with rational coefficients. As shown in Figure~\ref{f:rational-to-integral}, a rational surgery on a knot $K$ can be interpreted as an integral surgery on a link $L=K\cup U_{2}\cup\cdots\cup U_{n}$ where all the $U_{i}$'s are unknots. We will use this link $L$ to compute the Casson--Gordon signature invariants of $Y=S^{3}_{m^{2}/{q}}(K)$. Notice that any character $\phi:H_{1}(Y)\rightarrow\Zm$ can be determined from a character defined on  $H_{1}(S^{3}\setminus L)$ sending the meridian of $K$ to $a\in\Zm$ and extending to $H_{1}(Y)$. If in the link $L$ we replace $K$ with an unknot $U_{1}$ and leave the same surgery coefficients, we obtain a surgery description of the lens space $L(m^{2},-q)=S^3_{m^2/q}(U)$ and a character $\chi_{a}:H_{1}(L(m^{2},-q))\rightarrow\Zm$ sending the meridian of $U_{1}$ to $a$. With all these conventions in place, we have the following statement. 

\begin{prop}\label{p:ratcoef}
If $Y=S^3_{m^2/q}(K)$ bounds a rational homology ball $W$ with one $1$-handle, then $|\sigma_K(e^{2i\pi a/m})+\sigma(L(m^{2},-q),\chi_{a})| \le 1$ for every $1\le a < m$ such that $(a,m)=1$.
\end{prop}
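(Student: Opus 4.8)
The plan is to mirror the proof of Proposition~\ref{t:surgery}, replacing the single framed knot with the link $L = K \cup U_2 \cup \cdots \cup U_n$ that realises the rational surgery as an integral surgery, and then to use the Cimasoni--Florens formula~\eqref{e:CF} to split the resulting coloured signature into a contribution coming from $K$ and a contribution coming from the unknot components. First I would observe that, exactly as in Proposition~\ref{t:surgery}, since $H_1(Y)$ is cyclic and $W$ has a single $1$-handle, the character $\phi$ of order $m$ extends to an injective character $\psi$ on $\pi_1(W) = C_n$, and the homological argument (twisted Euler characteristic equal to $1$, with $b_0^\psi = b_1^\psi = b_3^\psi = 0$) gives $b_2^\psi(W) = 1$ and hence $|\sigma(Y,\phi)| = |\sigma^\psi(W)| \le 1$. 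This part is essentially verbatim from the previous proposition, so I would not repeat it in full.

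The substance of the new proof lies in evaluating $\sigma(Y,\phi)$ via~\eqref{e:CF} applied to the link $L$, and in recognising the unknot contribution as exactly $\sigma(L(m^2,-q),\chi_a)$. Concretely, I would set $\alpha = (\omega_m^{a}, \omega_m^{n_2}, \dots, \omega_m^{n_\nu})$, where the first colour $a$ is the image of the meridian of $K$ and the remaining colours come from the meridians of the $U_i$. The key structural fact I want to exploit is that the components $U_i$ are unknots and that $L$ differs from the lens-space link (obtained by replacing $K$ with an unknot $U_1$, keeping all surgery coefficients) only in the single component $K$. Since $K$ is split from the $U_i$ in the sense relevant to the coloured signature (the $U_i$ form the ``rational tail'' and are unknotted, linking $K$ only through the framing structure captured by $\Lambda$), the coloured signature $\sigma_L(\alpha)$ should decompose as $\sigma_K(\omega_m^a)$ plus the coloured signature of the unknot link. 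Similarly the framing and linking-matrix correction terms $-\sum_{i<j}\Lambda_{ij} - \mathrm{sign}(\Lambda) + \tfrac{2}{m^2}\sum_{i,j}(m-n_i)n_j\Lambda_{ij}$ split, with the part involving only the unknot components reassembling into precisely the Cimasoni--Florens expression~\eqref{e:CF} for $\sigma(L(m^2,-q),\chi_a)$ computed on the same link with $K$ replaced by $U_1$.

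The main obstacle I anticipate is making the decomposition $\sigma_L(\alpha) = \sigma_K(\omega_m^a) + (\text{unknot part})$ rigorous, since the coloured signature of a link is not in general additive, and $K$ is genuinely linked with the $U_i$ in the surgery diagram of Figure~\ref{f:rational-to-integral}. The honest justification should come from the behaviour of $\sigma_L$ under the specific form of that diagram: the unknots $U_i$ together with their framings encode the continued-fraction expansion of $m^2/q$, and the linking between $K$ and this chain is via a single clasp or meridional linking that, at the level of the C-complex or Seifert surface used to define the coloured signature, contributes a block that separates the Seifert form of $K$ from the portion coming from the chain. I would therefore carry out the argument by choosing a C-complex for $L$ that is a boundary connected sum (or an analogous split) of a Seifert surface for $K$ and a standard surface for the unknot chain, so that the generalised Seifert matrix becomes block diagonal up to the framing terms already recorded in $\Lambda$; the signature then adds block by block. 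Once this splitting is established, substituting into~\eqref{e:CF} and grouping the $K$-block against the unknot-block yields $\sigma(Y,\phi) = \sigma_K(e^{2i\pi a/m}) + \sigma(L(m^2,-q),\chi_a)$, and combining with $|\sigma(Y,\phi)| \le 1$ gives the stated inequality for every $a$ coprime to $m$ with $1 \le a < m$.
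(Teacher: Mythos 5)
Your proposal follows the paper's proof essentially verbatim: the same homological argument from Proposition~\ref{t:surgery} gives $|\sigma(Y,\phi)|\le 1$, and the C-complex you propose (a Seifert surface for $K$ together with one embedded disk per unknot in the chain) is exactly the paper's device for identifying $\sigma_{L}(\alpha)$ with $\sigma_{K}(\omega_{1})$ and the remaining Cimasoni--Florens terms with $\sigma(L(m^{2},-q),\chi_{a})$. The only refinement worth noting is that the ``unknot block'' you anticipate having to control is actually empty: the disks contribute nothing to $H_{1}$ of the C-complex, so $\sigma_{L}(\alpha)=\sigma_{K}(\omega_{1})$ on the nose, and the chain link presenting $L(m^{2},-q)$ has a contractible C-complex, whence $\sigma(L(m^{2},-q),\chi_{a})$ equals the \emph{entire} correction term $T_{\phi,\Lambda}$ (which is unchanged when $K$ is replaced by $U_{1}$, since the linking matrix and character values are the same) rather than only ``the part involving the unknot components.''
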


\begin{rmk}
There is an explicit formula for $\sigma(L(m^{2},-q),\chi_{a})$ given by Gilmer \cite[Example 3.9]{gilmer}.
\end{rmk}

\begin{proof}[Proof of Proposition~\ref{p:ratcoef}]
The same arguments used in Proposition~\ref{t:surgery} allow us to conclude in this case that any surjective character $\phi:H_{1}(Y)\rightarrow\Zm$ has an injective extension $\psi$ to $W$, $\sigma(Y,\phi)=\sigma^{\psi}(W)$ and, since $b_{2}^{\psi}(W)=1$, we have $|\sigma(Y,\phi)|\leq 1$.

To finish the proof, we want to express $\sigma(Y,\phi)$ using formula~\eqref{e:CF} applied to the surgery diagram depicted in Figure~\ref{f:rational-to-integral}. We refer the reader to \cite{CimasoniFlorens} for the pertinent definitions. The formula given by Cimasoni and Florens has one term that depends on the knot $K$, the colored signature of $L$, and all the others, which we will denote by $T_{\phi,\Lambda}$, depend exclusively on the image of the meridians of $L$ via $\phi$ and on the linking matrix $\Lambda$ of the surgery presentation of $Y$.
It follows that, with the exception of the first term in the formula, all the others remain unchanged if we substitute $K$ with an unknot.
That is, if we compute the Casson--Gordon invariant of a lens space from the chain surgery presentation with coefficients $(a_{1},\dots,a_{n})$ and for the character that is defined by sending the meridian of $U_{1}$ to $e^{2i\pi a/m}$. This Casson--Gordon invariant is precisely $\sigma(L(m^{2},-q),\chi_{a}$).

Now, notice that the link $L$ bounds an evident $C$-complex in the sense of~\cite{CimasoniFlorens} given by a Seifert surface for $K$ and a series of embedded disks, one for each unknot. The first homology of this complex coincides with the first homology of the Seifert surface for $K$, and the multivariable coloured signature of $L$  evaluated at any vector of roots of unity $(\omega_{1},\dots,\omega_{n})$ coincides with the Levine--Tristram signature of $K$ evaluated at $\omega_{1}$. 
This yields
\[
\sigma(Y,\phi)=\sigma_{L}(\omega_{1},\dots,\omega_{n})+T_{\phi,\Lambda}=\sigma_{K}(\omega_{1})+T_{\phi,\Lambda}.
\]
Since there is an evident contractible $C$-complex for the chain surgery presentation of $L(m^{2},-q)$, it follows that $\sigma(L(m^{2},-q),\chi_{a})=T_{\phi,\Lambda}$ and, by definition of $\phi$, we have $\omega_{1}=e^{2i\pi a/m}$. The result follows.
\end{proof}

\begin{figure}
\labellist
\pinlabel $K$ at 28 49
\pinlabel $\dots$ at 236 51
\pinlabel $a_1$ at 103 100
\pinlabel $a_2$ at 130 93
\pinlabel $a_n$ at 340 93
\endlabellist
\centering
\includegraphics[scale=0.9]{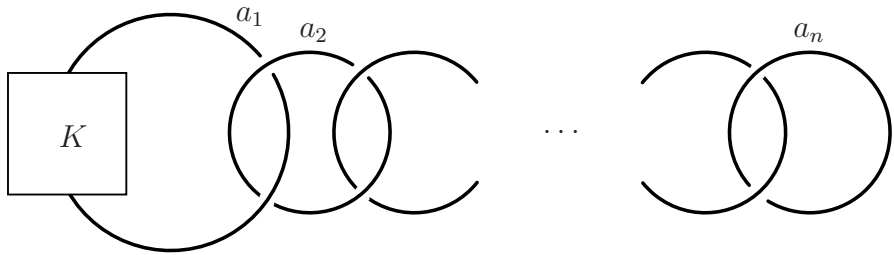}
\caption{The integral surgery picture for $p/q$-surgery along $K$, where $[a_1,\dots,a_n]^-$ is the negative continued fraction expansion of $p/q$.}
\label{f:rational-to-integral}
\end{figure}


\section{Examples}\label{examples}

\begin{example}
As promised in the introduction, we prove that the 3-manifold $Y = L(25,21)\# L(4,3)$ bounds no rational homology ball constructed with a single 1-handle.
However, it bounds a rational homology ball built with two 1-handles and two 2-handles.

Indeed, as shown by Moser~\cite{Moser}, $S^3_{100}(T_{4,25}) = L(25,21)\#L(4,3)$; we can now use Proposition~\ref{t:surgery} to obstruct the existence of such a ball.
In fact, using the formula from~\cite{Litherland}, we see that $\sigma_{T_{4,25}}(e^{i\pi/5}) = -15$ (note that $e^{i\pi/5}$ is a root of the Alexander polynomial of $T_{4,25}$, thus explaining why the signature is odd), and therefore
\[
|\sigma(Y,\phi)| = |1-\sigma_{T_{4,25}}(e^{i\pi/5})-2(10-1)| = |1+15-18| = 2.
\]
Since each of $L(25,21)$ and $L(4,3)$ bounds a rational homology ball built with a single 1-handle and a single 2-handle~\cite{LecuonaBaker}, their connected sum does indeed bound a rational homology ball, built with two 1-handles and two 2-handles, namely the boundary connected sum of the two balls above.

Also, note that $L(25,7)$ is a lens space that bounds a rational homology ball $W$, and that $L(25,7)\#L(25,7)$ bounds a rational homology ball built with a single 1-handle and a single 2-handle, which is therefore simpler than the boundary connected sum of two copies of $W$.
This shows that the example above is nontrivial.
\end{example}

In the following examples we will be using the Fibonacci numbers, defined by 
\[
\left\{\begin{array}{l}
F_0 = 0,\\
F_1 = 1,\\
F_{n+1} = F_n+F_{n-1}.
\end{array}\right.
\]
\begin{example}\label{e:torus}
In fact, the previous example readily generalises to the following family: whenever $Y_{a,b} = L(a^2,-b^2)\#L(b^2,-a^2) = S^3_{(ab)^2}(T_{a^2,b^2})$ and $\lfloor b/a\rfloor \le 2$ bounds a rational homology ball, we will show that for the character $\phi$ on $Y_{a,b}$ that maps the meridian to $\exp(2\pi i/ab)$, $|\sigma(Y_{a,b},\phi)|\ge 2$, thus showing that $Y_{a,b}$ does not bound a rational homology ball with one 1-handle.

Note that if $ab$ is odd, $Y_{a,b}$ is the branched double cover of a ribbon knot by work of Lisca~\cite{Lisca-ribbon}, and for every character $\phi'$ of order a \emph{prime power}, $|\sigma(Y_{a,b},\phi')|\le 1$~\cite[Theorem 2]{CassonGordon}.

We now prove the claim above that $|\sigma(Y_{a,b},\phi)|\ge 2$. From equation~\eqref{e:CF} we know that
\[
|\sigma(Y_{a,b},\phi)| =|\sigma_{T_{a^2,b^2}}(e^{2\pi i/ab})-1+\frac{2}{a^{2}b^{2}}(ab-1)a^{2}b^{2}| =|\sigma_{T_{a^2,b^2}}(e^{2\pi i/ab}) + 2ab -3|,
\]
and therefore it suffices to show that $\sigma_{T_{a^2,b^2}}(e^{2\pi i/ab}) \ge -2ab+5$.
Observe that the Alexander polynomial of $T_{a^2,b^2}$ has simple roots, and they are the
$a^2b^2$-th roots of unity that are neither $a^2$-th nor $b^2$-th roots of 1.
By these two observations, we know that $\sigma_{T_{a^2,b^2}}$ can only jump by 2 at each such root of unity, and that the value at these roots of unity is the average of the neighbouring values (and in particular it is odd); since the signature vanishes at $1$, at $e^{2\pi i/{ab}}$ it is bounded by the number of roots in the arc $e^{2\pi it}$ with $t$ in the open interval $(0,1/ab)$.
These are easily counted to be $ab-1-\lfloor b/a\rfloor \ge ab-3$.
Since $e^{2\pi i/ab}$ is a root of the Alexander polynomial
\[
|\sigma_{T_{a^2,b^2}}(e^{2\pi i/ab})| \le 2(ab-3)+1.
\]
Note that, a posteriori, since $Y_{a,b}$ bounds a rational homology ball built with two 1-handles,
\[
|\sigma(Y_{a,b},\phi)| \le 3
\]
which also implies that $\sigma_{T_{a^2,b^2}}(e^{2\pi i/ab}) \le 6-2ab$, thus proving that $\sigma_{T_{a^2,b^2}}(e^{2\pi i/ab}) = 5-2ab$ and that $\sigma_{T_{a^2,b^2}}(e^{2\pi it})$ is non-increasing for $t$ in the closed interval $[0,1/ab]$.

As a concrete example, we can choose $a= F_5 = 5$, $b= F_7 = 13$;
in this case, $Y_{a,b}$ bounds a rational homology ball, obtained as the complement of a rational cuspidal curve in $\mathbb{CP}^2$ (see~\cite{Kashiwara, 55letters}).
In fact, there is a rational curve $C$ in $\mathbb{CP}^2$ whose unique singularity has link $T(25,169)$~\cite[Theorem 1.1(c)]{55letters};
the boundary of an open regular neighbourhood $N$ of $C$ is $Y_{5,13}$, and the complement of $N$ in $\mathbb{CP}^2$ is a rational homology ball (see~\cite{BorodzikLivingston}).
Additionally, $K_{5,13} = K(25,-169)\#K(169,-25)=K(25,6)\#K(169,144)$, and each of the two summands is ribbon with fusion number 1.
However, $K_{5,13}$ has fusion number 2: indeed, by Corollary~\ref{c:ribbon} its fusion number is at least 2, and since it is the connected sum of two fusion number-1 knots, the inequality is sharp. These examples show, once again, that the assumption that the order of $\phi'$ is a prime power in~\cite[Theorem 2]{CassonGordon} is indeed essential.

Indeed, infinitely many pairs of odd integers arise in this fashion: for each pair $(a,b) = (F_{6k-1}, F_{6k+1})$ the 3-manifold $Y_{a,b}$ bounds a rational homology ball~\cite{Kashiwara, 55letters} and $H_1(Y_{a,b})$ has odd order, since both $F_{6k-1}$ and $F_{6k+1}$ are odd.
From now on we restrict to $(a,b)$ belonging to this family, which has $\lfloor b/a\rfloor = 2$.
If $(a,b) = (F_{6k+1}, F_{6k+3})$ or $(a,b) = (F_{6k+3}, F_{6k+5})$, then $Y_{a,b}$ still bounds a rational homology ball, but in this case $|H_1(Y_{a,b})| = a^2b^2$ is even, since $F_{6k+3}$ is divisible by $F_3 = 2$.

Note that, when $ab$ is odd, $Y_{a,b}$ is the branched double cover of a knot $K_{a,b}$ which is a connected sum of two ribbon 2-bridge knots, and therefore, since the fusion number of a ribbon 2-bridge knot is 1~\cite{LecuonaBaker}, the fusion number of $K_{a,b}$ is 2.
In order to see this, we use the (elementary) identity $a^2-3ab+b^2 = -1$; from this it follows that $-a^2 \equiv 1 \pmod b$ and that the quotient $(-a^2-1)/b = -3a+b$ is coprime with $b$; symmetrically, $(-b^2-1)/a$ is an integer coprime with $a$.
That is, both 2-bridge knots $K(a^2,-b^2)$ and $K(b^2,-a^2)$ are of the form $K(m^2,km+1)$ for some $k$ coprime with $m$, and hence they are ribbon~\cite{Lisca-ribbon}.
Moreover, as observed by Baker, Buck, and the third author~\cite{LecuonaBaker}, they both bound a ribbon disc built with a single 1-handle, hence their fusion number is 1.
By taking the branched double cover of the boundary connected sum of these ribbon discs, one exhibits a rational homology ball bounding $Y_{a,b}$ built with two 1-handles and two 2-handles, and this is minimal, by Proposition~\ref{t:surgery}.
\end{example}

\begin{example}
One can refine the example above to produce an \emph{irreducible} surgery that bounds no rational homology ball with one 1-handle, but does bound one with three.
In fact, $Y = S^3_{400}(T_{4,25;2,201})$ bounds a rational homology ball~\cite{PaoloMarcoKyleAna}, and a quick computation with the Levine--Tristram signature using~\cite{Litherland} and Proposition~\ref{t:surgery} yields
\[
|\sigma(Y,\phi)| = |1-\sigma_{T_{4,25;2,201}}(e^{i\pi/10})-2\cdot (20-1)| = |\sigma_{T_{2,201}}(e^{i\pi/10}) - 37| = |35-37| = 2,
\]
where $\phi$ is the character that sends the meridian to $e^{2i\pi/20}$.

Irreducibility is proven by looking at the canonical plumbing diagram for $Y$;
since it is connected, $Y$ is irreducible (see~\cite{Neumann,EisenbudNeumann} for details).

We can also find examples when $H_1(Y)$ has odd order. Indeed, one can look at $Y = S^3_{9\cdot25\cdot169}(T_{25,169;3,3\cdot25\cdot169+1})$ and the character $\phi$ that sends a meridian to $e^{2\pi i/(3\cdot 5\cdot 13)}$; a similar computation to the one above with the Levine--Tristram signatures yields:
\[
|\sigma(Y,\phi)| = \left|1-\sigma_{T_{25,169;3,12676}}(e^{2\pi i/195}) - 388\right| = 2,
\]
hence proving that $Y$ does not bound a rational homology ball with a single 1-handle.
\end{example}

We conclude with a rather lengthy example where we produce a family of irreducible 3-manifolds with cyclic first homology group. Each of these manifolds bounds a rational homology ball built with handles of index at most 2, but such that the number of handles needed is arbitrarily large.

The structure of the argument is the following: we fix an integer $v$, and we build a 3-manifold $Y$ by a construction that is akin to the plumbing of spheres. The manifold $Y$ will depend on the choice of $v$ knots $K_1,\dots,K_v\subset S^3$ and $v+1$ integers $a,n_1,\dots,n_v$, and we show that, under certain assumptions, all these manifolds have cyclic $H_1$.
We then specialise to a certain family of knots $K_j$ and integers $n_j$, coming from the example above, and we prove that the resulting $Y$ does indeed bound a rational homology ball built out of $2v+1$ 1-handles and $2v+1$ 2-handles.
We then compute the signature defect associated to a certain character $\phi$ of $Y$ within an error of 2, and using Corollary~\ref{t:cyclicH1}
we show that any rational homology ball 2-handlebody needs at least $2v-1$ 1-handles.
Finally, we argue the irreducibility of $Y$.

\begin{example}
Let us consider the following (modified) plumbing diagram, representing a 3-manifold $Y$:
\[
 \xygraph{
!{<0cm,0cm>;<1cm,0cm>:<0cm,1cm>::}
!~-{@{-}@[|(2.5)]}
!{(-1,0) }*+{\bullet}="c"
!{(1,0) }*+{\bullet}="d"
!{(3,0) }*+{\dots}="e"
!{(5,0) }*+{\bullet}="x"
!{(7,0) }*+{\bullet}="g"
!{(3,1.5) }*+{\bullet}="h"
!{(3,3) }*+{\bullet}="ac"
!{(1.5,3) }*+{\bullet}="al"
!{(4.5,3) }*+{\bullet}="ar"
!{(-1,-0.4) }*+{[K_1, n_1^2]}
!{(1,-0.4) }*+{[K_2, n_2^2]}
!{(5,-0.4) }*+{[K_{v-1}, n_{v-1}^2]}
!{(7,-0.4) }*+{[K_{v}, n_{v}^2]}
!{(3.4,1.7) }*+{a}
!{(4.5,3.4) }*+{-2}
!{(1.5,3.4) }*+{-2}
!{(3,3.4) }*+{-1}
"c"-"h"
"d"-"h"
"h"-"x"
"h"-"g"
"h"-"ac"
"ac"-"al"
"ac"-"ar"
}
\]
Where each label $[K,n]$ at the bottom signifies that, in the corresponding surgery picture for $Y$, instead of an unknot we use the knot $K$
the knot $K$, with framing $n$.
In other words, instead of plumbing sphere bundles, we plumb the trace of $n$-surgery along $K$ using the co-core of the 2-handle.

Whenever $S^3_{n_j^2}(K_j)$ bounds a rational homology ball for each $j = 1,\dots, v$, so does $Y$ (\cite{Paolo}; see also~\cite{PaoloMarco}).
We claim that if each $n_j$ is odd, $n_j$ and $n_k$ are pairwise coprime for each $j\neq k$, and $a\not\equiv v \pmod 2$, then $H_1(Y)$ is cyclic. From now on, we will make these assumptions on $n_j$ and $a$ and we shall specialise both $K_j$ and $n_j$ later.

After two subsequents blowdowns we obtain the following diagram:
\[
 \xygraph{
!{<0cm,0cm>;<1cm,0cm>:<0cm,1cm>::}
!~-{@{-}@[|(2.5)]}
!{(-1,0) }*+{\bullet}="c"
!{(1,0) }*+{\bullet}="d"
!{(3,0) }*+{\dots}="e"
!{(5,0) }*+{\bullet}="x"
!{(7,0) }*+{\bullet}="g"
!{(3,1.5) }*+{\bullet}="h"
!{(3,3) }*+{\bullet}="ac"
!{(-1,-0.4) }*+{[K_1, n_1^2]}
!{(1,-0.4) }*+{[K_2, n_2^2]}
!{(5,-0.4) }*+{[K_{v-1}, n_{v-1}^2]}
!{(7,-0.4) }*+{[K_{v}, n_{v}^2]}
!{(3.8,1.7) }*+{a+2}
!{(3,3.4) }*+{0}
"c"-"h"
"d"-"h"
"h"-"x"
"h"-"g"
"h"-@/^/ "ac"
"h"-@/_/ "ac"
}
\]
Note that the double edge between the two top-most vertices is not to be intended in the plumbing diagram sense, but rather signifies a double linking between the corresponding attaching circles; in particular, it \emph{does not} increase $b_1$. Let $P$ be the 4-manifold associated to the diagram above, with $\partial P = Y$.

Notice that $P$ is a 2-handlebody, i.e. it is obtained from $B^4$ by attaching only 2-handles, and hence $H_1(P;R) = 0$ and $H_2(P;R) = H_2(P)\otimes R$, and $H_2(P,Y;R) = H_2(P,Y)\otimes R$ for each ring $R$, and that the latter are both free over $R$ of rank $v+2$.

We now set out to compute $H_1(Y)$ as the quotient of $\Z^{v+2}$ by the image of the intersection matrix of the link coming from the diagram above. The matrix is
\[
Q = \left(\begin{array}{ccccc}
a+2 & 2&1&\cdots&1\\
2 & 0&0&\cdots&0\\
1 & 0&n_1^2&&\\
\vdots & \vdots&&\ddots&\\
1 & 0&&&n_v^2
\end{array}\right).
\]

By expanding along the second row, we easily see that $|{\det Q}| = (\prod_{j=0}^v n_j)^2$, where we let $n_0 = 2$ for convenience.
 
In order to see that $H_1(Y)$ is cyclic, since $|{\det Q}| = \prod_{j=0}^v |\Z/n_j^2\Z|$, it is enough to check that, for each $j$, $H_1(Y;\Z/n_j^2\Z)\cong \Z/n_j^2\Z$.
Let $R = \Z/n_j^2\Z$.

The long exact sequence for the pair $(P,Y)$ yields:
\[
\xymatrix{
H_2(P;R) \ar[d]^\cong \ar[r] & H_2(P,Y;R) \ar[r]\ar[d]^\cong&  H_1(Y;R)\ar[r]\ar[d]^\cong & 0\\
R^{v+2} \ar[r]^{Q_R} & R^{v+2}\ar[r]&  \coker Q_R\ar[r] & 0
}
\]
where $Q_R$ is the reduction of $Q$ modulo $n_j^2$. It is enough to show that the quotient is cyclic.
This is elementary from the matrix $Q$, since $n_k$ is now invertible in $R$ for every $k\neq j$.
When $j=0$, it is helpful (but not necessary) to use the fact that $n_k^2 \equiv 1 \pmod 4$ for each $k>0$; one then reduces to the case of the matrix $\left(\begin{array}{cc}
a+2-v & 2\\
2 & 0
\end{array}\right)$, which is well-known to have cyclic cokernel precisely when $a+2-v$ is odd.

Observe that, after doing a dot-zero surgery on the 0-framed unknot, the diagram above also exhibits a rational homology cobordism $W$ from $Y' := \#_{j=1}^{v}S^3_{n_j^2}(K_j)$ to $Y$.
Moreover, it is easy to check that the inclusion induces an injection $i'_*: H_1(Y')\to H_1(W)$, hence every character $\phi'$ of $Y'$ extends to $W$, and we can further restrict it to $Y$.
Let $i_*: H_1(Y)\to H_1(W)$ be the map induced by the inclusion.
Additionally, since $\coker i'_* = \coker i_* = \Z/2\Z$ and since $|H_1(Y')|$ is odd, the order of the induced character on $Y$ is either the order of $\phi'$ or twice as large.

We are going to look at a character $\phi: H_1(Y)\to \C^*$ induced as above from the character $\phi'$ on $Y'$ that sends the meridian of $K_j$ to $\exp(2\pi i/n_j^2)$ for each $j$.

By additivity of the Casson--Gordon signature defects~\cite{Gilmer-additive},
\[
|\sigma(Y',\phi')| = \left\lvert\sum_{j=1}^n \sigma\big(S^3_{n_j^2}(K_j),\phi_j\big)\right\rvert = \left\lvert\sum_{j=1}^n \left(1 - \sigma_{K_j}\big(e^{2\pi i/n_j}\big) + 2(n_j-1)\right)\right\rvert.
\]
As in Example~\ref{e:torus}, we turn our attention to torus knots and Fibonacci numbers and from now on we assume that $K_j = T_{F_{p_j}^2, F_{p_j+2}^2}$, $n_j = F_{p_j}F_{p_j+2}$, where the sequence $p_j$ is defined recursively by $p_1 = 5$, $p_{j+1} = 6\prod_{k\le j} (p_k^2+2p_k)-1$.

Since $\gcd(F_a,F_b) = F_{\gcd(a,b)}$, and since $F_3 = 2$, we have that $n_j$ is odd for each $j$.
Moreover, by construction,
\[
p_j \equiv 1 \pmod 2, \quad p_j \equiv -1 \pmod {p_k}, \quad p_j \equiv -1 \pmod {p_k+2},
\]
and hence both $p_j$ and $p_j+2$ are odd and coprime with $p_k$ for each $k$; thus, $n_j$ and $n_k$ are coprime, too.

It follows from work of Kashiwara~\cite{Kashiwara} (see also~\cite{55letters, BorodzikLivingston}) that $S^3_{n_j^2}(K_j)$, which we denoted by $Y_{F_{p_{j}},F_{p_{j}+2}}$ in Example~\ref{e:torus}, bounds a rational homology ball.
Moreover, Example~\ref{e:torus} shows that we can choose these balls to be constructed using only 1- and 2-handles.
Since $W$ is a rational homology cobordism constructed with one 1-handle and one 2-handle, $Y$ also bounds a rational homology ball constructed without 3-handles. In fact, the rational homology balls constructed in Example~\ref{e:torus} used two 1-handles, and $W$ uses only one 1-handle, so $Y$ bounds a rational homology ball constructed with $2v+1$ 1-handles and $2v+1$ 2-handles.

In Example~\ref{e:torus} we estimated the Casson--Gordon signature and obtained 
\[
\left|\sigma(Y_{F_{p_{j}},F_{p_{j}+2}},\phi_{j})\right|\ge 2,
\]
which combined with the additivity under connected sum yields  $|\sigma(Y',\phi')| \geq 2v$.

Since $\phi'$ extends to the cobordism $W$, we can glue $W$ to any 4-manifold $Z'$ to which $\phi'$ extends (rationally), and use the resulting 4-manifold $Z = W\cup Z'$ to compute the signature defect $\sigma(Y,\phi)$.

Since $W$ is a rational homology cobordism, the ordinary signature does not change; that is, $\sigma(Z) = \sigma(Z')$. The twisted signature $\sigma^\psi(Z)$ is also controlled by $\sigma^{\psi'}(Z')$: indeed, since $W$ contains a single 2-handle, $|\sigma^\psi(Z) -\sigma^{\psi'}(Z')| \le 1$ by Novikov additivity.

It follows that $|\sigma(Y,\phi)| \ge |\sigma(Y',\phi')-1|\ge 2v-1$, and therefore any rational homology ball filling $Y$, that is built only using 1- and 2-handles, has at least $2v-1$ 1-handles, by Corollary~\ref{t:cyclicH1}.

To conclude, we argue that $Y$ is irreducible. Indeed, we can replace each of the nodes labelled with $[K_j, n_j^2]$ above with a negative definite plumbing tree,
and, using Neumann's criterion~\cite{Neumann}, one can check that the plumbing is in normal form, and its boundary is irreducible.
%

\end{example}

\bibliographystyle{amsplain}
\bibliography{casson-gordon}
\end{document}